\newtheorem{theorem}{Theorem}
\newtheorem{corollary}[theorem]{Corollary}
\newtheorem{definition}[theorem]{Definition}
\newenvironment{proof}[1][Proof]{\noindent\textbf{#1.} }{\ \rule{0.5em}{0.5em}}
\begin{document}

\title{Chebyshev polynomial coefficient bounds for a subclass of
bi-univalent functions}
\author{\c{S}ahsene Alt\i nkaya$^{1,\ast }$ and Sibel Yal\c{c}\i n$^{1}$ \\
$^{1}$Department of Mathematics, Faculty of Arts and Science,\\
Uludag University, Bursa, Turkey.\\
$^{\ast }$corresponding author. e-mail: sahsene@uludag.edu.tr\\
$^{1}$syalcin@uludag.edu.tr}
\maketitle

\begin{abstract}
In this work, considering\ a general subclass of bi-univalent functions and
using the Chebyshev polynomials, we obtain coefficient expansions for
functions in this class.

Keywords: Chebyshev polynomials; bi-univalent functions; coefficient bounds;
subordination.

Mathematics Subject Classification, 2010: 30C45, 30C50.
\end{abstract}

\section{Introduction and Definitions}

Let $D$ be the unit disk $\left\{ z:z\in 
\mathbb{C}
\text{ and }\left\vert z\right\vert <1\right\} \ ,$ $A$ be the class of
functions analytic in $D,$ satisfying the conditions%
\begin{equation*}
f(0)=0\text{ and}\ \ \ f^{\prime }(0)=1.
\end{equation*}%
Then each function $f$ in $A$ has the Taylor expansion 
\begin{equation}
f(z)=z+\overset{\infty }{\underset{n=2}{\sum }}a_{n}z^{n}.  \label{eq1}
\end{equation}%
Further, by $S$ we shall denote the class of all functions in $A$ which are
univalent in $D.$

The Koebe one-quarter theorem \cite{Duren 83} ensures that the image of $D$
under every function $f$ from $S$ contains a disk of radius $\frac{1}{4}.$
Thus every such univalent function has an inverse $f^{-1}$ which satisfies%
\begin{equation*}
f^{-1}\left( f\left( z\right) \right) =z~,~\left( z\in D\right)
\end{equation*}%
and 
\begin{equation*}
f\left( f^{-1}\left( w\right) \right) =w~,~\left( \left\vert w\right\vert
<r_{0}\left( f\right) ~,~r_{0}\left( f\right) \geq \frac{1}{4}\right) ,
\end{equation*}%
where%
\begin{equation*}
f^{-1}\left( w\right) =w~-a_{2}w^{2}+\left( 2a_{2}^{2}-a_{3}\right)
w^{3}-\left( 5a_{2}^{3}-5a_{2}a_{3}+a_{4}\right) w^{4}+\cdots .
\end{equation*}

A function $f\in A$ is said to be bi-univalent in $D$ if both $f$ and $%
f^{-1} $ are univalent in $D.$ Let $\Sigma $ represent the class of
bi-univalent functions in $D$ given by (\ref{eq1}). For some intriguing
examples of functions and characterization of the class $\Sigma ,$ one could
refer Srivastava et al. \cite{Srivastava 2010}, and the references stated
therein (see also, \cite{Owa 2012}). Recently there has been triggering ,
interest to study the bi-univalent functions class $\Sigma $ (see \cite%
{Altinkaya and Yalcin 2014}, \cite{Crisan 2013}, \cite{Keerthi and Raja 2013}%
, \cite{Magesh and Yamini 2013}, \cite{Porwal and Darus 2013}) and obtain
non-sharp estimates on the first two Taylor-Maclaurin coefficients $|a_{2}|$
and $|a_{3}|$. Not much is known about the bounds on the general coefficient$%
\ \left\vert a_{n}\right\vert $ for $n\geq 4.$ In the literature, there are
only a few works determining the general coefficient bounds $\left\vert
a_{n}\right\vert $ for the analytic bi-univalent functions (\cite{Altinkaya
and Yalcin 2014a}, \cite{Hamidi and Jahangiri 2014}, \cite{Jahangiri and
Hamidi 2013}). The coefficient estimate problem for each of $\left\vert
a_{n}\right\vert $ $\left( \ n\in 
\mathbb{N}
\backslash \left\{ 1,2\right\} ;\ \ 
\mathbb{N}
=\left\{ 1,2,3,...\right\} \right) $ is still an open problem.

Chebyshev polynomials have become increasingly important in numerical
analysis, from both theoretical and practical points of view. There are four
kinds of Chebyshev polynomials. The majority of books and research papers
dealing with specific orthogonal polynomials of Chebyshev family, contain
mainly results of Chebyshev polynomials of first and second kinds $T_{n}(t)$
and $U_{n}(t)$ and their numerous uses in different applications, see for
example, Doha \cite{Doha 94} and Mason \cite{Mason 67}.

The Chebyshev polynomials of the first and second kinds are well known. In
the case of a real variable $x$ on $\left( -1,1\right) $, they are defined by

\begin{equation*}
T_{n}(t)=\cos n\theta ,
\end{equation*}

\begin{equation*}
U_{n}(t)=\frac{\sin (n+1)\theta }{\sin \theta },
\end{equation*}%
where the subscript $n$ denotes the polynomial degree and where $t=\cos
\theta $.

If the functions $f$ and $g$ are analytic in $D,$ then $f$ is said to be
subordinate to $g,$ written as%
\begin{equation*}
f\left( z\right) \prec g\left( z\right) ,\ \ \ \ \ \ \ \ \ \ \left( z\in
D\right)
\end{equation*}%
if there exists a Schwarz function $w\left( z\right) ,$ analytic in $D,$ with%
\begin{equation*}
w\left( 0\right) =0~\ \text{and\ \ \ \ \ }\left\vert w\left( z\right)
\right\vert <1\ \ \ \ \ \ \ \ \ \ \left( z\in D\right)
\end{equation*}%
such that 
\begin{equation*}
f\left( z\right) =g\left( w\left( z\right) \right) \ \ \ \ \ \ \ \ \ \ \ \
\left( z\in D\right) .
\end{equation*}

\begin{definition}
A function $f\in \Sigma $ is said to be in the class $H_{\Sigma }\left(
\lambda ,t\right) ,$ $\lambda \geq 0\ $and $t\in \left( \frac{\sqrt{2}}{2},1%
\right] ,$ if the following subordination hold%
\begin{equation}
\left( 1-\lambda \right) \dfrac{zf^{\prime }\left( z\right) }{f\left(
z\right) }+\lambda \left( 1+\dfrac{zf^{\prime \prime }\left( z\right) }{%
f^{\prime }\left( z\right) }\right) \prec H(z,t)=\frac{1}{1-2tz+z^{2}}\ \ \
\ \ \ (z\in D)  \label{eq2}
\end{equation}%
and%
\begin{equation}
\left( 1-\lambda \right) \dfrac{wg^{\prime }\left( w\right) }{g\left(
w\right) }+\lambda \left( 1+\dfrac{wg^{\prime \prime }\left( w\right) }{%
g^{\prime }\left( w\right) }\right) \prec H(w,t)=\frac{1}{1-2tw+w^{2}}\ \ \
\ \ \ (w\in D)  \label{eq3}
\end{equation}%
where $g\left( w\right) =f^{-1}\left( w\right) .$
\end{definition}

We note that if $t=\cos \alpha ,\ \alpha \in \left( -\frac{\pi }{3},\frac{%
\pi }{3}\right) ,$ then%
\begin{equation*}
\begin{array}{ll}
H(z,t) & =\dfrac{1}{1-2tz+z^{2}} \\ 
&  \\ 
& =1+\overset{\infty }{\underset{n=1}{\sum }}\dfrac{\sin (n+1)\alpha }{\sin
\alpha }z^{n}\ \ \ (z\in D).%
\end{array}%
\end{equation*}%
Thus%
\begin{equation*}
H(z,t)=1+2\cos \alpha z+(3\cos ^{2}\alpha -\sin ^{2}\alpha )z^{2}+\cdots \ \
\ (z\in D).
\end{equation*}%
Following see, we write%
\begin{equation*}
H(z,t)=1+U_{1}(t)z+U_{2}(t)z^{2}+\cdots \ \ \ (z\in D,\ \ t\in \left(
-1,1\right) ),
\end{equation*}%
where $U_{n-1}=\dfrac{\sin (n\arccos t)}{\sqrt{1-t^{2}}}\ \ (n\in 
\mathbb{N}
)$ are the Chebyshev polynomials of the second kind. Also it is known that 
\begin{equation*}
U_{n}(t)=2tU_{n-1}(t)-U_{n-2}(t),
\end{equation*}%
and%
\begin{equation}
\begin{array}{l}
U_{1}(t)=2t, \\ 
\\ 
U_{2}(t)=4t^{2}-1, \\ 
\\ 
U_{3}(t)=8t^{3}-4t, \\ 
\vdots%
\end{array}
\label{eq4}
\end{equation}%
The Chebyshev polynomials $T_{n}(t),\ t\in \lbrack -1,1],\ $of the first
kind have the generating function of the form%
\begin{equation*}
\overset{\infty }{\underset{n=0}{\sum }}T_{n}(t)z^{n}=\dfrac{1-tz}{%
1-2tz+z^{2}}\ \ \ (z\in D).
\end{equation*}%
However, the Chebyshev polynomials of the first kind $T_{n}(t)$ and the
second kind $U_{n}(t)$ are well connected by the following relationships%
\begin{equation*}
\begin{array}{l}
\dfrac{dT_{n}(t)}{dt}=nU_{n-1}(t), \\ 
\\ 
T_{n}(t)=U_{n}(t)-tU_{n-1}(t), \\ 
\\ 
2T_{n}(t)=U_{n}(t)-U_{n-2}(t).%
\end{array}%
\end{equation*}

In this paper, motivated by the earlier work of Dziok et al. \cite{Dziok
2015}, we use the Chebyshev polynomial expansions to provide estimates for
the initial coefficients of bi-univalent functions in $H_{\Sigma }\left(
\lambda ,t\right) $. We also solve Fekete-Szeg\"{o} problem for functions in
this class.

\section{Coefficient bounds for the function class $H_{\Sigma }\left( 
\protect\lambda ,t\right) $}

\begin{theorem}
Let $\ $the function $f\left( z\right) $ given by (\ref{eq1}) be in the
class $H_{\Sigma }\left( \lambda ,t\right) .$ Then%
\begin{equation*}
\left\vert a_{2}\right\vert \leq \frac{2t\sqrt{2t}}{\sqrt{\left\vert \left(
1+\lambda \right) ^{2}-4\left( \lambda +\lambda ^{2}\right) t^{2}\right\vert 
}}
\end{equation*}%
and%
\begin{equation*}
\left\vert a_{3}\right\vert \leq \frac{4t^{2}}{\left( 1+\lambda \right) ^{2}}%
+\frac{t}{1+2\lambda }.
\end{equation*}
\end{theorem}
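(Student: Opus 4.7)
The plan is to unpack the two subordinations in Definition 1 into Schwarz-function identities, expand every piece as a power series in $z$ (resp.\ $w$), match coefficients up to order two, and then combine the resulting four equations to isolate $a_2^2$ and $a_3$ separately.

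First I would write the subordinations as the equalities
\[
(1-\lambda)\frac{zf'(z)}{f(z)}+\lambda\!\left(1+\frac{zf''(z)}{f'(z)}\right)=H(u(z),t),\quad
(1-\lambda)\frac{wg'(w)}{g(w)}+\lambda\!\left(1+\frac{wg''(w)}{g'(w)}\right)=H(v(w),t),
\]
where $u(z)=b_1z+b_2z^2+\cdots$ and $v(w)=e_1w+e_2w^2+\cdots$ are Schwarz functions, and where $g(w)=f^{-1}(w)=w-a_2w^2+(2a_2^2-a_3)w^3+\cdots$. Expanding $zf'/f$ and $1+zf''/f'$ via the standard power series manipulations, the left-hand side of the first equation becomes
\[
1+(1+\lambda)a_2\,z+\bigl[(2+4\lambda)a_3-(1+3\lambda)a_2^2\bigr]z^2+\cdots,
\]
and the right-hand side, using $H(z,t)=1+U_1(t)z+U_2(t)z^2+\cdots$, becomes $1+U_1(t)b_1z+[U_1(t)b_2+U_2(t)b_1^2]z^2+\cdots$. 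The same routine applied to the $g$-side yields the analogous two equations with $-a_2$ in place of $a_2$ at order $w$, and $(3+5\lambda)a_2^2-(2+4\lambda)a_3$ at order $w^2$, matched against $U_1(t)e_1$ and $U_1(t)e_2+U_2(t)e_1^2$ respectively.

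The first and third equations give $(1+\lambda)a_2=U_1(t)b_1=-U_1(t)e_1$, so in particular $b_1^2=e_1^2$. Adding the second and fourth equations then eliminates $a_3$ and leaves
\[
2(1+\lambda)a_2^2=U_1(t)(b_2+e_2)+2U_2(t)b_1^2.
\]
Substituting $b_1^2=(1+\lambda)^2a_2^2/U_1(t)^2$, solving for $a_2^2$, and plugging in $U_1(t)=2t$, $U_2(t)=4t^2-1$ from (\ref{eq4}) turns the denominator into $2[(1+\lambda)^2-4(\lambda+\lambda^2)t^2]$; the Schwarz bounds $|b_2|,|e_2|\le 1$ then deliver the stated estimate on $|a_2|$. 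For $|a_3|$, I would instead subtract the fourth equation from the second: on the left the $a_2^2$ terms collapse to $-4(1+2\lambda)(a_3-a_2^2)$, while on the right the $U_2(t)b_1^2$ terms cancel because $b_1^2=e_1^2$, leaving $U_1(t)(b_2-e_2)$. Solving for $a_3$ and then using the expression $a_2^2=U_1(t)^2b_1^2/(1+\lambda)^2=4t^2b_1^2/(1+\lambda)^2$ gives
\[
a_3=\frac{4t^2b_1^2}{(1+\lambda)^2}+\frac{t(b_2-e_2)}{2(1+2\lambda)},
\]
whence the triangle inequality together with $|b_1|,|b_2|,|e_2|\le 1$ produces the bound on $|a_3|$.

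The main obstacle is bookkeeping rather than conceptual: correctly computing the order-two coefficient of the Libera--Sălăgean-type operator $(1-\lambda)zf'/f+\lambda(1+zf''/f')$ for both $f$ and $g=f^{-1}$, and then choosing the right linear combinations of the four coefficient equations so that the $U_2(t)b_1^2$ contribution either cancels (for the $a_3$ estimate) or reduces cleanly (for the $a_2$ estimate). Once that algebraic organization is in place, the Chebyshev identities $U_1(t)=2t$ and $U_2(t)=4t^2-1$ and the standard Schwarz-coefficient inequalities finish the argument.
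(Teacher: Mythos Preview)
Your proposal is correct and follows essentially the same route as the paper's own proof: write the subordinations via Schwarz functions, match coefficients through order two for both $f$ and $g=f^{-1}$, add the two order-two equations (after eliminating $b_1^2$ via the order-one relation) to isolate $a_2^2$, and subtract them to isolate $a_3-a_2^2$. The only slip is a harmless sign: subtracting the fourth equation from the second gives $+4(1+2\lambda)(a_3-a_2^2)$ on the left, not $-4(1+2\lambda)(a_3-a_2^2)$, but your subsequent formula for $a_3$ and the final bounds are unaffected.
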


\begin{proof}
Let $f\in H_{\Sigma }\left( \lambda ,t\right) .$ From (\ref{eq2}) and (\ref%
{eq3}), we have%
\begin{equation}
\left( 1-\lambda \right) \dfrac{zf^{\prime }\left( z\right) }{f\left(
z\right) }+\lambda \left( 1+\dfrac{zf^{\prime \prime }\left( z\right) }{%
f^{\prime }\left( z\right) }\right) =1+U_{1}(t)w(z)+U_{2}(t)w^{2}(z)+\cdots ,
\label{eq5}
\end{equation}%
and%
\begin{equation}
\left( 1-\lambda \right) \dfrac{wg^{\prime }\left( w\right) }{g\left(
w\right) }+\lambda \left( 1+\dfrac{wg^{\prime \prime }\left( w\right) }{%
g^{\prime }\left( w\right) }\right) =1+U_{1}(t)v(w)+U_{2}(t)v^{2}(w)+\cdots ,
\label{eq6}
\end{equation}%
for some analytic functions $w,v\ $such that $w(0)=v(0)=0$ and $\left\vert
w(z)\right\vert <1,\left\vert v(w)\right\vert <1$ for all $z\in D.$ From the
equalities (\ref{eq5}) and (\ref{eq6}), we obtain that 
\begin{equation}
\left( 1-\lambda \right) \dfrac{zf^{\prime }\left( z\right) }{f\left(
z\right) }+\lambda \left( 1+\dfrac{zf^{\prime \prime }\left( z\right) }{%
f^{\prime }\left( z\right) }\right) =1+U_{1}(t)c_{1}z+\left[
U_{1}(t)c_{2}+U_{2}(t)c_{1}^{2}\right] z^{2}+\cdots ,  \label{eq7}
\end{equation}%
and%
\begin{equation}
\left( 1-\lambda \right) \dfrac{wg^{\prime }\left( w\right) }{g\left(
w\right) }+\lambda \left( 1+\dfrac{wg^{\prime \prime }\left( w\right) }{%
g^{\prime }\left( w\right) }\right) =1+U_{1}(t)d_{1}w+\left[
U_{1}(t)d_{2}+U_{2}(t)d_{1}^{2}\right] w^{2}+\cdots .  \label{eq8}
\end{equation}%
It is fairly well-known that if $\left\vert w(z)\right\vert =\left\vert
c_{1}z+c_{2}z^{2}+c_{3}z^{3}+\cdots \right\vert <1\ $and $\left\vert
v(w)\right\vert =\left\vert d_{1}w+d_{2}w^{2}+d_{3}w^{3}+\cdots \right\vert
<1,\ z,w\in D,\ $then%
\begin{equation*}
\left\vert c_{j}\right\vert \leq 1,\ \ \ \ \forall j\in 
\mathbb{N}
.
\end{equation*}

It follows from (\ref{eq7}) and (\ref{eq8}) that 
\begin{equation}
\left( 1+\lambda \right) a_{2}=U_{1}(t)c_{1},  \label{eq9}
\end{equation}%
\begin{equation}
2\left( 1+2\lambda \right) a_{3}-\left( 1+3\lambda \right)
a_{2}^{2}=U_{1}(t)c_{2}+U_{2}(t)c_{1}^{2},  \label{eq10}
\end{equation}%
and%
\begin{equation}
-\left( 1+\lambda \right) a_{2}=U_{1}(t)d_{1},  \label{eq11}
\end{equation}%
\begin{equation}
2\left( 1+2\lambda \right) \left( 2a_{2}^{2}-a_{3}\right) -\left( 1+3\lambda
\right) a_{2}^{2}=U_{1}(t)d_{2}+U_{2}(t)d_{1}^{2}.  \label{eq12}
\end{equation}%
From (\ref{eq9}) and (\ref{eq11}) we obtain%
\begin{equation}
c_{1}=-d_{1}  \label{eq13}
\end{equation}%
and%
\begin{equation}
2\left( 1+\lambda \right) ^{2}a_{2}^{2}=U_{1}^{2}(t)\left(
c_{1}^{2}+d_{1}^{2}\right) .  \label{eq14}
\end{equation}%
By adding (\ref{eq10}) to (\ref{eq12}), we get 
\begin{equation}
\left[ 4\left( 1+2\lambda \right) -2\left( 1+3\lambda \right) \right]
a_{2}^{2}=U_{1}(t)\left( c_{2}+d_{2}\right) +U_{2}(t)\left(
c_{1}^{2}+d_{1}^{2}\right) .  \label{eq16}
\end{equation}%
By using (\ref{eq14}) in equality (\ref{eq16}), we have 
\begin{equation}
\left[ 2\left( 1+\lambda \right) -\frac{2U_{2}(t)}{U_{1}^{2}(t)}\left(
1+\lambda \right) ^{2}\right] a_{2}^{2}=U_{1}(t)\left( c_{2}+d_{2}\right) .
\label{eq17}
\end{equation}%
From (\ref{eq4}) and (\ref{eq17}) we get 
\begin{equation*}
\left\vert a_{2}\right\vert \leq \frac{2t\sqrt{2t}}{\sqrt{\left\vert \left(
1+\lambda \right) ^{2}-4\left( \lambda +\lambda ^{2}\right) t^{2}\right\vert 
}}.
\end{equation*}%
Next, in order to find the bound on $\left\vert a_{3}\right\vert ,$ by
subtracting (\ref{eq12}) from (\ref{eq10}), we obtain%
\begin{equation}
4\left( 1+2\lambda \right) a_{3}-4\left( 1+2\lambda \right)
a_{2}^{2}=U_{1}(t)\left( c_{2}-d_{2}\right) +U_{2}(t)\left(
c_{1}^{2}-d_{1}^{2}\right) .  \label{eq18}
\end{equation}%
Then, in view of (\ref{eq13}) and (\ref{eq14}) , we have from (\ref{eq18}) 
\begin{equation*}
a_{3}=\frac{U_{1}^{2}(t)}{2\left( 1+\lambda \right) ^{2}}\left(
c_{1}^{2}+d_{1}^{2}\right) +\frac{U_{1}(t)}{4\left( 1+2\lambda \right) }%
\left( c_{2}-d_{2}\right) .
\end{equation*}%
Notice that (\ref{eq4}), we get%
\begin{equation*}
\left\vert a_{3}\right\vert \leq \frac{4t^{2}}{\left( 1+\lambda \right) ^{2}}%
+\frac{t}{1+2\lambda }.
\end{equation*}
\end{proof}

\section{Fekete-Szeg\"{o} inequalities for the function class $H_{\Sigma
}\left( \protect\lambda ,t\right) $}

\begin{theorem}
Let $\ f$ given by (\ref{eq1}) be in the class $H_{\Sigma }\left( \lambda
,t\right) \ $and$~\mu \in 
\mathbb{R}
.$ Then%
\begin{equation*}
\left\vert a_{3}-\mu a_{2}^{2}\right\vert \leq \left\{ 
\begin{array}{l}
\dfrac{t}{1+2\lambda }; \\ 
\ \ \ \ \ \ \ for\ \left\vert \mu -1\right\vert \leq \tfrac{1}{8\left(
1+2\lambda \right) }\left\vert \frac{\left( 1+\lambda \right) ^{2}}{t^{2}}%
-4\lambda (1+\lambda )\right\vert \\ 
\\ 
\dfrac{8\left\vert 1-\mu \right\vert t^{3}}{\left\vert 4(1+\lambda
)t^{2}-\left( 1+\lambda \right) ^{2}\left( 4t^{2}-1\right) \right\vert }; \\ 
\ \ \ \ \ \ \ for\ \left\vert \mu -1\right\vert \geq \tfrac{1}{8\left(
1+2\lambda \right) }\left\vert \frac{\left( 1+\lambda \right) ^{2}}{t^{2}}%
-4\lambda (1+\lambda )\right\vert%
\end{array}%
\right. .
\end{equation*}
\end{theorem}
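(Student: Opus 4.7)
The plan is to piggy-back on the relations already derived in the proof of the previous theorem and then run the classical Fekete-Szegő argument of separating a linear combination into two independent pieces. From the earlier proof, equation (\ref{eq17}) yields
\[
a_{2}^{2}=\frac{U_{1}^{3}(t)(c_{2}+d_{2})}{2(1+\lambda)U_{1}^{2}(t)-2U_{2}(t)(1+\lambda)^{2}},
\]
while subtracting (\ref{eq12}) from (\ref{eq10}) and using $c_1=-d_1$ (see (\ref{eq18})) gives
\[
a_{3}-a_{2}^{2}=\frac{U_{1}(t)}{4(1+2\lambda)}\,(c_{2}-d_{2}).
\]
Writing $a_{3}-\mu a_{2}^{2}=(1-\mu)a_{2}^{2}+(a_{3}-a_{2}^{2})$ and collecting terms, I would put the expression into the canonical form
\[
a_{3}-\mu a_{2}^{2}=U_{1}(t)\bigl[(h(\mu,t)+k)c_{2}+(h(\mu,t)-k)d_{2}\bigr],
\]
where $k=\dfrac{1}{4(1+2\lambda)}$ and
\[
h(\mu,t)=\frac{(1-\mu)U_{1}^{2}(t)}{2(1+\lambda)U_{1}^{2}(t)-2U_{2}(t)(1+\lambda)^{2}}.
\]

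Next I would apply the triangle inequality together with the Schwarz-function bounds $|c_{2}|\le 1$ and $|d_{2}|\le 1$ to obtain $|a_{3}-\mu a_{2}^{2}|\le U_{1}(t)\bigl(|h+k|+|h-k|\bigr)$, and then split according to whether $|h(\mu,t)|\le k$ or $|h(\mu,t)|\ge k$. In the first case $|h+k|+|h-k|=2k$, producing the flat bound $\dfrac{t}{1+2\lambda}$; in the second case $|h+k|+|h-k|=2|h|$, producing the $|1-\mu|$-dependent bound.

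The final step is to substitute $U_{1}(t)=2t$ and $U_{2}(t)=4t^{2}-1$ from (\ref{eq4}) and simplify. The denominator of $h(\mu,t)$ becomes $2(1+\lambda)\bigl[(1+\lambda)-4\lambda t^{2}\bigr]$, so in the second regime the bound collapses to $\dfrac{8|1-\mu|t^{3}}{|4(1+\lambda)t^{2}-(1+\lambda)^{2}(4t^{2}-1)|}$ after multiplying numerator and denominator by $(1+\lambda)$. The threshold $|h(\mu,t)|=k$, after the same substitutions and a division by $t^{2}$, rearranges to $|\mu-1|=\tfrac{1}{8(1+2\lambda)}\bigl|\tfrac{(1+\lambda)^{2}}{t^{2}}-4\lambda(1+\lambda)\bigr|$, matching the case divide stated in the theorem.

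The only nontrivial hurdle is purely algebraic: keeping the absolute value on $(1+\lambda)-4\lambda t^{2}$ (which can change sign depending on $\lambda$ and $t$) intact throughout the simplification, so that the two presentations of the denominator agree and the threshold condition comes out in the symmetric form involving $\tfrac{(1+\lambda)^2}{t^2}-4\lambda(1+\lambda)$. No deeper estimate is needed; the rest is the standard split-and-bound technique.
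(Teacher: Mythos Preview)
Your proposal is correct and follows essentially the same route as the paper: you use the expressions for $a_{2}^{2}$ from (\ref{eq17}) and for $a_{3}-a_{2}^{2}$ from (\ref{eq18}), form the same linear combination $U_{1}(t)\bigl[(h+k)c_{2}+(h-k)d_{2}\bigr]$ with the identical $h(\mu)$ and $k=\tfrac{1}{4(1+2\lambda)}$, and then apply the standard $|h|\lessgtr k$ split; the only difference is that you spell out the final algebraic substitutions $U_{1}=2t$, $U_{2}=4t^{2}-1$ and the threshold rewriting more explicitly than the paper does.
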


\begin{proof}
From (\ref{eq17}) and (\ref{eq18})%
\begin{equation*}
\begin{array}{ll}
a_{3}-\mu a_{2}^{2} & =\left( 1-\mu \right) \dfrac{U_{1}^{3}(t)\left(
c_{2}+d_{2}\right) }{2\left( 1+\lambda \right) U_{1}^{2}(t)-2U_{2}(t)\left(
1+\lambda \right) ^{2}}+\dfrac{U_{1}(t)}{4(1+2\lambda )}\left(
c_{2}-d_{2}\right) \\ 
&  \\ 
& =U_{1}(t)\left[ \left( h\left( \mu \right) +\frac{1}{4\left( 1+2\lambda
\right) }\right) c_{2}+\left( h\left( \mu \right) -\frac{1}{4\left(
1+2\lambda \right) }\right) d_{2}\right]%
\end{array}%
\end{equation*}%
where%
\begin{equation*}
h\left( \mu \right) =\dfrac{U_{1}^{2}(t)\left( 1-\mu \right) }{2\left[
\left( 1+\lambda \right) U_{1}^{2}(t)-U_{2}(t)\left( 1+\lambda \right) ^{2}%
\right] }.
\end{equation*}%
Then, in view of (\ref{eq4}), we conclude that%
\begin{equation*}
\left\vert a_{3}-\mu a_{2}^{2}\right\vert \leq \left\{ 
\begin{array}{ll}
\dfrac{t}{1+2\lambda } & 0\leq \left\vert h\left( \mu \right) \right\vert
\leq \dfrac{1}{4\left( 1+2\lambda \right) } \\ 
4t\left\vert h\left( \mu \right) \right\vert & \left\vert h\left( \mu
\right) \right\vert \geq \dfrac{1}{4\left( 1+2\lambda \right) }.%
\end{array}%
\right.
\end{equation*}%
Taking $\mu =1$ we get

\begin{corollary}
If $\ f\in H_{\Sigma }\left( \lambda ,t\right) ,$ then 
\begin{equation*}
\left\vert a_{3}-a_{2}^{2}\right\vert \leq \dfrac{t}{1+2\lambda }.
\end{equation*}
\end{corollary}
\end{proof}

\end{document}